\documentclass[11pt]{article}

\usepackage[T1]{fontenc}
\usepackage[utf8]{inputenc}
\usepackage{lmodern}
\usepackage{amsmath,amssymb,amsthm}
\usepackage{booktabs}
\usepackage{geometry}
\usepackage{hyperref}
\usepackage{array}
\usepackage{enumitem}

\hypersetup{
  hidelinks,
  pdftitle={A Mixed-Resonance Counterexample to the Lukic Conjecture},
  pdfauthor={Jun Yan}
}

\newtheorem{theorem}{Theorem}[section]
\newtheorem{proposition}[theorem]{Proposition}
\newtheorem{lemma}[theorem]{Lemma}
\newtheorem{remark}[theorem]{Remark}

\newcommand{\ZH}{Z_H}
\newcommand{\cE}{\mathcal E}
\newcommand{\cF}{\mathcal F}
\newcommand{\cL}{\mathcal L}

\title{A Mixed-Resonance Counterexample to the Lukic Conjecture}
\author{Jun Yan}
\date{25 July 2026}

\begin{document}
\maketitle

\begin{abstract}
Let $\mu$ be a probability measure on the unit circle with Verblunsky
coefficients $\alpha$. Lukic conjectured that a weighted entropy condition
with finitely many critical points is equivalent to a decomposition of
$\alpha$ into components localized at those points. We give a counterexample 
for two critical points of multiplicity three, found by GPT-5.6. The
construction uses two phase modes with common power-decay exponent
$3/20$. The sequence admits the Lukic's decomposition conditions, but its
weighted entropy for the corresponding two-point weight equals $-\infty$.

\end{abstract}

\section{Introduction}

Let
\[
  d\mu(\theta)=w(\theta)\frac{d\theta}{2\pi}+d\mu_s
\]
be a probability measure on the unit circle with infinite support, and let
$\alpha=(\alpha_n)$ be its Verblunsky coefficients. For distinct angles
$\theta_1,\dots,\theta_K$ and positive integers $m_1,\dots,m_K$, write
\begin{equation}
  H(e^{i\theta})
  =\prod_{j=1}^{K}\bigl(1-\cos(\theta-\theta_j)\bigr)^{m_j}.
  \label{eq:H-general}
\end{equation}
Lukic's modified conjecture asserts that the weighted entropy condition
\begin{equation}
  \int_0^{2\pi}H(e^{i\theta})\log w(\theta)
  \,\frac{d\theta}{2\pi}>-\infty
  \label{eq:entropy}
\end{equation}
is equivalent to the existence of sequences
$\beta^{(1)},\dots,\beta^{(K)}$ satisfying
\begin{equation}
  \alpha=\sum_{j=1}^{K}\beta^{(j)},\qquad
  (S-e^{-i\theta_j})^{m_j}\beta^{(j)}\in\ell^2,\qquad
  \beta^{(j)}\in\ell^{\,2m_j+2}.
  \label{eq:decomposition}
\end{equation}

\paragraph{Background and related work.}
The starting point is Szeg\H{o}'s classical theorem \cite{Szego}; standard
background on OPUC and Verblunsky coefficients is given in Simon's
monograph \cite{SimonOPUC}, while the broader ``spectral gem'' viewpoint is
developed in \cite{SimonGems}. Denisov and Kupin treated polynomially
weighted Szeg\H{o} classes \cite{DenisovKupin}; Simon and Zlato\v{s}
settled important two-singularity cases \cite{SimonZlatos}; and Golinskii
and Zlato\v{s} proved the general polynomial-weight equivalence under the
additional hypothesis $\alpha\in\ell^4$ \cite{GZ}.

Simon formulated an early general higher-order conjecture. Lukic found a
counterexample to its unrestricted multipoint form and proposed the
localized decomposition conditions \eqref{eq:decomposition}
\cite{LukicConjecture}. He subsequently proved a single-critical-point
theorem under an additional first-difference assumption
\cite{LukicSingle}. On the probabilistic side, Gamboa, Nagel, and Rouault
developed sum rules from large deviations, first for scalar spectral
measures and then in related matrix-measure settings
\cite{GNRcircle,GNRmatrix,GNR}. Breuer, Simon, and Zeitouni gave a
pedagogical development of this method \cite{BSZPedagogical} and then
derived the abstract sum rule used here, verifying several cases of
Lukic's conjecture, including a restricted $(2,1)$ case \cite{BSZ}.

Yan introduced an algebraic model for the homogeneous coefficient-side
polynomials, whose representative has a Hall--Littlewood form connected
with the symmetric-function framework of Macdonald \cite{Macdonald,Yan}.
He recovered the Golinskii--Zlato\v{s} theorem and proved the $K=1$,
arbitrary-order sufficiency direction
\eqref{eq:decomposition}$\Rightarrow$\eqref{eq:entropy}
\cite[Theorem~4]{Yan}. The analytic interpolation step in that argument is
a discrete Gagliardo--Nirenberg inequality, rooted in the classical work
of Gagliardo and Nirenberg and discussed in standard analysis references
\cite{Gagliardo,Nirenberg,SimonAnalysis,Taylor}. These results left the
unrestricted higher-multiplicity multipoint problem unresolved.

\begin{theorem}[Main result]
\label{thm:main}
The implication from the decomposition conditions
\eqref{eq:decomposition} to the entropy condition \eqref{eq:entropy} is
false. More precisely, take
\[
  K=2,\qquad (\theta_1,\theta_2)=(0,\pi/2),\qquad
  (m_1,m_2)=(3,3).
\]
For every $0<c<1/2$, the Verblunsky sequence
\begin{equation}
  \alpha_n=c(n+1)^{-3/20}\bigl(1+(-i)^n\bigr)
  \label{eq:main-alpha}
\end{equation}
has a decomposition satisfying \eqref{eq:decomposition}, whereas its
associated measure obeys
\begin{equation}
  \int_0^{2\pi}(1-\cos\theta)^3(1-\sin\theta)^3
  \log w(\theta)\,\frac{d\theta}{2\pi}=-\infty.
  \label{eq:main-entropy}
\end{equation}
\end{theorem}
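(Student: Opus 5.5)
The decomposition half is explicit. I would take
\[
  \beta^{(1)}_n=c(n+1)^{-3/20},\qquad \beta^{(2)}_n=c(n+1)^{-3/20}(-i)^n ,
\]
so that $\alpha=\beta^{(1)}+\beta^{(2)}$. The first condition is $(S-e^{-i\theta_j})^3\beta^{(j)}\in\ell^2$. With the paper's convention for the shift $S$, the operator $(S-1)^3$ applied to $\beta^{(1)}$ is a third finite difference of the amplitude $a_n=c(n+1)^{-3/20}$. Conjugating by the phase $(-i)^n$ shows that $(S+i)^3\beta^{(2)}$ is $(-i)^{n}$ times a third difference of $a_n$, up to a unimodular constant. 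If the convention puts the phase as $e^{+i\theta_j}$ instead, I would replace $(-i)^n$ by $i^n$ in $\beta^{(2)}$ only; this does not affect the entropy side.

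Third differences of $a_n$ are $O(n^{-3/20-3})$, which lies in $\ell^2$. For the second condition, $|\beta^{(j)}_n|^8\sim n^{-6/5}$ is summable, so $\beta^{(j)}\in\ell^{8}=\ell^{2m_j+2}$. Finally, $c<1/2$ gives $|\alpha_n|\le 2c<1$, so $\alpha$ is a genuine Verblunsky sequence.

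For the entropy half I would use the Breuer--Simon--Zeitouni abstract sum rule \cite{BSZ}, in the coefficient-side form of \cite{Yan}, for the degree-6 trigonometric weight $H=(1-\cos\theta)^3(1-\sin\theta)^3$. This expresses $\int H\log w\,\frac{d\theta}{2\pi}$, up to a nonnegative spectral term (which can only push $\int H\log w$ further down, so it does not hurt the divergence), as
\[
  -\sum_n \Bigl(\text{a local polynomial } P(\alpha_n,\dots,\alpha_{n+6})\;+\;\text{a tail of } \log(1-|\alpha_n|^2)\Bigr).
\]
Since $2c<1$, the logarithmic tail expands into an absolutely convergent power series in $|\alpha_n|^2$. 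I would sort the expanded summand by homogeneous degree and track which monomials are summable.

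Degree $\ge 8$: these terms are $O(n^{-6/5})$, hence summable.

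Degree 2: these are quadratic forms that factor through $(S-1)^3$ and $(S+i)^3$ applied to the two modes, plus cross terms. The diagonal parts are summable by the difference estimates above. The oscillating cross terms are summable by summation by parts, since $\Delta a_n=O(n^{-23/20})$.

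Degrees 4 and 6: a monomial in the two modes carries a phase $(-i)^{kn}$ and has size $\asymp n^{-3/5}$ or $n^{-9/10}$. The non-resonant monomials, those with $k\not\equiv 0 \pmod 4$, are again handled by Abel summation. What survives is a resonant ("mixed-resonance") sum $\sum_n \bigl(A\,a_n^4+B\,a_n^6\bigr)$, plus summable errors, where $A$ and $B$ are explicit constants determined by $H$.

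The main obstacle is computing $A$ and $B$ exactly and showing that the surviving series diverges with the sign forcing $\int H\log w=-\infty$. Concretely one needs either $A>0$, or $A=0$ and $B>0$, in the convention where the $P$-series enters with a minus sign. I expect the degree-4 resonant coefficient to be the delicate one. Each pure mode is annihilated to high order by its own factor of $H$, so only cross terms $|\beta^{(1)}|^2|\beta^{(2)}|^2$, together with phase-matched terms such as $(\beta^{(1)})^2\overline{\beta^{(2)}}^{\,2}$ at the resonant frequency, can contribute. Their coefficient should equal a value of $H$'s factors evaluated at the opposite critical point.

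My first attempt would use the Hall--Littlewood representative from \cite{Yan}. I would evaluate the degree-4 and degree-6 parts on the two-frequency ansatz with a constant amplitude. This reduces the problem to evaluating a symmetric function at the points $1$ and $-i$ (and possibly their conjugates), and I would then check the sign numerically. Once the sign of the leading non-summable resonant term is fixed, the remaining divergence is forced by $\sum n^{-3/5}=\infty$ (or $\sum n^{-9/10}=\infty$), which yields \eqref{eq:main-entropy}.
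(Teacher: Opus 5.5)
Your architecture is the paper's: the same splitting $\alpha=\beta^{(1)}+\beta^{(2)}$, the BSZ gem read on the coefficient side through Yan's representative, frozen amplitudes, and Abel summation for the nonresonant phases. The gap is that you stop exactly where the proof's content lies. Everything depends on the exact resonant coefficients $A$ (degree four) and $B$ (degree six), and you leave them uncomputed (``check the sign numerically''). Nothing in your argument excludes $A=B=0$. In that case every remaining term is summable, the coefficient side stays bounded, and no conclusion follows. So as written the entropy half is unproved.

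Your heuristics for this step also point the wrong way. For $k\le 3$, a monomial is resonant only when the unbarred and the conjugated factors contain the same number of oscillatory components. Thus $(\beta^{(1)})^2\overline{\beta^{(2)}}^{\,2}$, whose phase is $(-1)^n$, is not resonant. Nor is the quartic coefficient a nonzero value of $H$ at the opposite critical point; in fact $A=0$. At every resonant selection with $k=2$, the arguments $a_{2,p}b_{2,q}$ lie in $\{1,i\}$, and the confluent divided differences use only $H,H',H''$. These vanish because $H$ has sixth-order zeros at $1$ and $i$.

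The missing input is the degree-six evaluation. At a resonant selection $R_3=1$, and the raw value equals $\sum_{\ell=-6}^{6}h_\ell A_\ell(a)B_\ell(b)$. Here $A_\ell,B_\ell$ are the divided-difference expressions: complete homogeneous symmetric polynomials in $a$, $b$ or their reciprocals, with $A_{-1}=A_{-2}=B_1=B_2=0$. For certain mixed selections an argument $a_{3,p}b_{3,q}$ equals $-i$, where $H(-i)=8\neq 0$. The exact raw sums over oscillatory multiplicities $0,1,2,3$ are $0,6,6,0$. With $Z_H=41/8$ this gives $B=12/(3Z_H)=32/41>0$, so the coefficient side grows like $\frac{320c^6}{41}N^{1/10}$. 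This is also why the exponent window is $(1/8,1/6]$, not the $(1/8,1/4]$ that your ``$A>0$'' branch anticipates.

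A smaller point concerns the measure side. On the circle the BSZ measure side is exactly $H(\eta\mid\mu)$, with no extra spectral term. Had there been a nonnegative term alongside $H(\eta\mid\mu)$, it could absorb the divergence of the coefficient side. It would then weaken the conclusion, not leave it untouched as you claim.
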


A corrected theorem will need additional mixed-frequency conditions. The
exact coefficient table suggests that these conditions should be organized
by resonant oscillatory multiplicities of the homogeneous sum-rule
polynomials, rather than solely by the individual critical orders.

\begin{remark}
The result does not claim that every two-point choice fails. Indeed, Breuer, Simon,
and Zeitouni proved the sufficiency direction for one simple and one double
singularity, written $(2,1)$ in \cite[Theorem~9.1]{BSZ} and equivalently
$(1,2)$ after relabeling, under their antipodal-angle and
real-Verblunsky hypotheses.
\end{remark}

\begin{remark}
The proof uses the published abstract gem of Breuer, Simon, and Zeitouni
\cite{BSZ} and Yan's algebraic representative \cite{Yan}. The decisive
coefficient is also computed directly from a finite unitary GGT matrix, so
the conclusion does not rest on the quotient-algebra calculation introduced in \cite{Yan} alone.
\end{remark}

The exponent $3/20$ is chose by two competing thresholds. Each localized
component must lie in $\ell^8$, which requires an exponent strictly larger
than $1/8$. The surviving sextic term must fail to be summable, which
requires an exponent at most $1/6$. The value $3/20$ lies strictly between
them.

\paragraph{Statement of AI use.}
This example was generated by \mbox{GPT-5.6}. The author prompted
\mbox{GPT-5.6} during the process to extend the previous proof of $K=1$ case to the general
case, and the model found this example.

\section{The BSZ sum rule and Yan's algebraic representative}

\subsection{The measure-side functional}

Expand
\[
  H(e^{i\theta})=\sum_{\ell=-d}^{d}h_\ell e^{i\ell\theta},
  \qquad d=\sum_jm_j,
\]
and set
\begin{equation}
  \ZH=h_0=\frac{1}{2\pi}\int_0^{2\pi}H(e^{i\theta})\,d\theta,
  \label{eq:ZH}
\end{equation}
\begin{equation}
  d\eta(\theta)=\frac{H(e^{i\theta})}{\ZH}\frac{d\theta}{2\pi}.
  \label{eq:eta}
\end{equation}
The zeros of $H$ are finite, so $\eta$ has full support and a density that
is strictly positive almost everywhere. The Fourier identity gives the
Laurent polynomial potential
\begin{equation}
  V(z)=-\frac{1}{\ZH}
  \sum_{\substack{\ell=-d\\ \ell\ne0}}^{d}
  \frac{h_\ell}{|\ell|}z^\ell.
  \label{eq:potential}
\end{equation}
If $H(\eta\mid\mu)$ denotes relative entropy with $\eta$ in the first
argument, then
\begin{equation}
  H(\eta\mid\mu)
  =C_H-\frac{1}{\ZH}\int_0^{2\pi}
  H(e^{i\theta})\log w(\theta)\frac{d\theta}{2\pi},
  \label{eq:relative-entropy}
\end{equation}
where
\[
  C_H=\frac{1}{\ZH}\int H\log(H/\ZH)\,\frac{d\theta}{2\pi}
\]
is finite. Thus finiteness of $H(\eta\mid\mu)$ is equivalent to
\eqref{eq:entropy}.

\subsection{The finite-volume and infinite-volume coefficient sides}

Let $U_N$ be a finite $N\times N$ unitary CMV or GGT matrix with
$\alpha_{N-1}$ on the unit circle. BSZ prove that there are
$N$-independent boundary polynomials $F_-$ and $F_+$ and a local polynomial
$G$ such that
\begin{equation}
  \operatorname{Tr}V(U_N)
  =F_-+F_+
  +\sum_{n=0}^{N-1-d}G(\alpha_n,\dots,\alpha_{n+d}).
  \label{eq:local-trace}
\end{equation}

\begin{theorem}[BSZ abstract gem]
\label{thm:BSZ}
Let
\begin{equation}
  \mathcal B_N(\alpha)
  =\sum_{n=0}^{N}
  \left[
    G(\alpha_n,\dots,\alpha_{n+d})
    -\log(1-|\alpha_n|^2)
  \right].
  \label{eq:BSZ-bulk}
\end{equation}
For every Verblunsky sequence $\alpha$ and its associated measure $\mu$,
the extended-real limit
\[
  \lim_{N\to\infty}\mathcal B_N(\alpha)\in\mathbb R\cup\{+\infty\}
\]
exists, after the telescoping normalization of $G$ used by BSZ, and
\begin{equation}
  H(\eta\mid\mu)<\infty
  \quad\Longleftrightarrow\quad
  \lim_{N\to\infty}\mathcal B_N(\alpha)<\infty.
  \label{eq:abstract-gem-criterion}
\end{equation}
\end{theorem}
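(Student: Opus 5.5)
The statement is the abstract sum rule of Breuer, Simon and Zeitouni \cite{BSZ}, and in the paper it is quoted, not reproved. If I had to prove it from scratch, I would follow the large-deviation route of Gamboa, Nagel and Rouault \cite{GNRcircle,GNR}, as systematized in \cite{BSZPedagogical,BSZ}. The idea is to build one random model whose spectral measure satisfies a large deviation principle in two ways: with rate $H(\eta\mid\mu)$ on the measure side, and with rate $\lim_N\mathcal B_N(\alpha)$ on the coefficient side. Since the map $\mu\mapsto\alpha$ is a homeomorphism (weak topology versus product topology), uniqueness of rate functions forces the two rates to coincide. That identity gives both the existence of the limit in $\mathbb R\cup\{+\infty\}$ and the equivalence \eqref{eq:abstract-gem-criterion}.

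\emph{Step 1: the random model.} Take $U_N$ to be Haar-distributed on $U(N)$, tilted by the density proportional to $\exp(-N\operatorname{Tr}V(U_N))$, where $V$ is the potential \eqref{eq:potential}. I would use the spectral measure of $U_N$ at a cyclic vector. By the Killip--Nenciu change of variables, under Haar measure the Verblunsky coefficients $\alpha_0,\dots,\alpha_{N-2}$ are independent, with densities proportional to $(1-|\alpha_n|^2)^{N-n-2}$, and $\alpha_{N-1}$ is uniform on the circle.

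\emph{Step 2: the coefficient-side density.} Inserting the local trace formula \eqref{eq:local-trace}, the tilted joint density becomes
\[
\exp\Bigl(-N\sum_n\bigl[G(\alpha_n,\dots,\alpha_{n+d})-\log(1-|\alpha_n|^2)\bigr]+O(\text{boundary}+n\text{-corrections})\Bigr).
\]
This is a finite-range Gibbs-type weight on sequences. I would then fix the telescoping normalization: add a coboundary $g(\alpha_{n+1},\dots)-g(\alpha_n,\dots)$ to $G$ so that each summand is bounded below on the closed disk. This makes $\mathcal B_N$ essentially monotone up to a bounded boundary term, which gives existence of the extended-real limit.

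\emph{Step 3: the coefficient-side LDP.} I would prove an LDP at speed $N$ for the first $k$ coefficients for every fixed $k$, with rate given by the finite sums, and lift it to the product topology by Dawson--G\"artner. This produces the rate $I(\alpha)=\lim_N\mathcal B_N(\alpha)$, after subtracting the normalizing constant.

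\emph{Step 4: the measure-side LDP.} Writing the spectral measure as $\sum_j\lambda_j\delta_{e^{i\theta_j}}$, the eigenvalues form a $\beta=2$ Coulomb gas with external field $V$, and the weights are Dirichlet-distributed. By construction of $V$ from the Fourier coefficients of $H$, the equilibrium measure of this field is exactly $\eta$. Following Gamboa--Rouault, the rate function is then $H(\eta\mid\mu)$, and by \eqref{eq:relative-entropy} finiteness of this rate is equivalent to \eqref{eq:entropy}.

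\emph{Main obstacle.} I expect the hardest step to be the coefficient-side LDP in Step 3. Unlike the product case ($V=0$), $G$ couples neighbouring coefficients and is not sign-definite, so the two standard inputs break down. Exponential tightness can fail near $|\alpha_n|=1$, and the Laplace-method upper bound does not factor. My first attempt would be to exploit the normalization from Step 2 to bound each block from below. I would then treat the block of $d$ consecutive coefficients as a single Markov-type site and apply the Gibbs variational principle on finite windows, controlling the truncation error by the uniform bound on the boundary polynomials $F_\pm$.

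A second delicate point is matching the normalizing constants of the two LDPs, so that the rates agree exactly and not just up to an additive constant. I would handle this by evaluating both rates at $\mu=\eta$, where both must vanish.
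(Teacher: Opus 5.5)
The paper gives no proof of this statement beyond citing \cite[Theorems~3.5--3.6]{BSZ}. Your overall architecture is the large-deviation route of that source: tilted Haar measure, Killip--Nenciu, the measure-side rate $H(\eta\mid\mu)$ with $\eta$ the equilibrium measure of $V$, transfer through the homeomorphism $\mu\mapsto\alpha$, and uniqueness of rate functions. The gap is in Steps~2--3, which is exactly where the content of the statement lies: existence of $\lim_N\mathcal B_N$ and its identification with the rate.

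The rate function of $(\alpha_0,\dots,\alpha_{k-1})$ is not ``given by the finite sums''. By the contraction principle it is $I_k(a)=\inf\{I(\beta):\beta|_k=a\}$. When you integrate out $\alpha_k,\alpha_{k+1},\dots$, the $d$ straddling terms $G(\alpha_n,\dots,\alpha_{n+d})$ with $k-d\le n<k$ enter multiplied by $N$. They therefore act at the LDP speed and produce an $a$-dependent correction. The integrated tail has about $N$ coordinates with exponents $N-j-2$, so it is a genuine free energy, not a Laplace minimization. For the same reason $F_\pm$ are not error terms: they are bounded, but appear as $NF_\pm$, so $F_-$ is part of the rate. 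Your existence argument in Step~2 also fails. Each summand is automatically bounded below on the closed polydisk, and this gives no monotonicity. You would need nonnegativity after a finite-range coboundary, which is not available a priori. It would assert that the averaged energy is nonnegative on every sequence, and that is essentially part of what is being proved.

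The missing step can be supplied without any independent coefficient-side LDP. The space $\overline{\mathbb D}^{\mathbb N}$ is compact, so your tightness concern near $|\alpha_n|=1$ does not arise. The measure-side LDP pushed through the homeomorphism already yields an LDP for $\alpha$ with rate $I(\alpha)=H(\eta\mid\mu)$; what remains is to identify $I$ on cylinders. Since $G$ is a polynomial on the closed polydisk, replacing $\alpha_{k-d},\dots,\alpha_{k-1}$ by $0$ in the straddling terms changes $\frac1N\log$ of the tail integral by at most $C\max_{k-d\le j<k}|\alpha_j|$, uniformly in $N$ and in the tail variables. This gives
\[
I_k(a)-I_k(0)=E_k(a)-E_k(0)+O\bigl(\max_{k-d\le j<k}|a_j|\bigr),
\]
where $E_k$ is the explicit sum of the terms involving only $a$, including $F_-$ and $-\sum_{j<k}\log(1-|a_j|^2)$.

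Dawson--G\"artner gives $I_k(\alpha|_k)\uparrow I(\alpha)$ and $I_k(0)\uparrow I(0)=H(\eta\mid d\theta/2\pi)=C_H$. Normalizing at $\alpha=0$, where the coefficient side is explicit, fixes the additive constant. Evaluating at $\eta$ would not help, since the coefficient side there is not explicit.

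If $\alpha_n\to0$, the error term vanishes. The limit then exists and equals $H(\eta\mid\mu)-C_H$, up to the $F_-$ convention. If $\alpha_n\not\to0$, the measure side is $+\infty$ by Rakhmanov's theorem, because finite entropy forces $w>0$ a.e. The uniformly bounded error, together with boundedness of $G$, then forces $\mathcal B_N\to+\infty$ as well.
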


This is the infinite-volume formulation of
\cite[Theorems~3.5--3.6]{BSZ}.  It is the form used in the proof below.
The relation with genuine finite-unitary CMV or GGT functionals is a
consequence of \eqref{eq:local-trace}; the precise bounded-error comparison
needed here is recorded in Proposition~\ref{prop:comparison}.

\subsection{Yan's homogeneous representative}

Write $G=\sum_{k=1}^{d}G_{2k}$, where $G_{2k}$ is homogeneous of degree
$2k$ in $\alpha$ and its conjugate. Yan uses formal variables $x_j$ and
$y_j$ to record shifts of the $j$th unbarred and conjugated factors. Thus
$x_j^p$ records $\alpha_{n+p}$, while $y_j^q$ records
$\overline{\alpha_{n+q}}$. With these shift symbols, set
\begin{equation}
\begin{split}
  a_{k,p}&=\prod_{s=p}^{k}y_s\prod_{s=p+1}^{k}x_s,\\
  b_{k,q}&=\prod_{s=1}^{q}x_sy_s,\qquad
  R_k=\prod_{s=1}^{k}x_sy_s.
\end{split}
\label{eq:abR}
\end{equation}
Set
\begin{equation}
  \cL_{2k}
  =
  \sum_{p,q=1}^{k}
  \frac{H(a_{k,p}b_{k,q})}
  {\displaystyle
  \prod_{s\ne p}(1-a_{k,s}/a_{k,p})
  \prod_{t\ne q}(b_{k,q}/b_{k,t}-1)}.
  \label{eq:Yan-Laurent}
\end{equation}
Although the individual summands are rational, their sum is a finite
Laurent polynomial.  Choose an integer $M_k\ge0$, large enough that
\[
  R_k^{M_k}\cL_{2k}
  \in\mathbb C[x_1,y_1,\dots,x_k,y_k].
\]
Such an $M_k$ exists because $\cL_{2k}$ has only finitely many Laurent
monomials.  We use the ordinary-polynomial representative
\begin{equation}
  \cF_{2k}^{(M_k)}
  =\frac{(-1)^{k+1}}{k\ZH}R_k^{M_k}\cL_{2k}.
  \label{eq:Yan-representative}
\end{equation}
Because $R_k=1$ in Yan's quotient ring, multiplying by
$R_k^{M_k}$ does not change the quotient class.  Moreover, any two
admissible choices of $M_k$ give ordinary-polynomial representatives whose
partial sums differ by a uniformly bounded telescoping term; this is
verified in Section~7.2.  The precise rule translating the formal shift
symbols back to sequences is
\begin{equation}
  \left[
  \phi_{2k}\left(\prod_{j=1}^{k}x_j^{p_j}y_j^{q_j}\right)
  \right]_n
  =\prod_{j=1}^{k}\alpha_{n+p_j}\overline{\alpha_{n+q_j}}.
  \label{eq:phi}
\end{equation}
The variables $x_j,y_j$ are therefore not additional Verblunsky
coefficients. They are bookkeeping devices for shifts. Later we evaluate
them at phase numbers because a pure phase $\lambda^n$ is an eigenvector of
the shift:
\[
S(\lambda^n)=\lambda\,\lambda^n.
\]
This point is explained before the coefficient calculation in
Section~4.3.

The $-1/k$ term in Yan's quotient representative is combined with the
Taylor expansion of $\log(1-|\alpha_n|^2)$.

\section{Construction of the counterexample}

\subsection{Choice of phases and decay}

Take $\theta_0=0$, $\theta_1=\pi/2$, and $m_0=m_1=3$. The critical shift
phases are $\lambda_0=1$ and $\lambda_1=-i$. Put
\begin{equation}
  r_n=(n+1)^{-3/20},\qquad
  \beta_n^{(0)}=cr_n,\qquad
  \beta_n^{(1)}=c(-i)^nr_n,
  \label{eq:components}
\end{equation}
\begin{equation}
  \alpha_n=\beta_n^{(0)}+\beta_n^{(1)}
  =cr_n\bigl(1+(-i)^n\bigr).
  \label{eq:alpha-decomp}
\end{equation}
Since $|\alpha_n|\le 2c<1$, Verblunsky's theorem gives a unique probability
measure $\mu$ with infinite support and these coefficients.

\subsection{Verification of Lukic's decomposition conditions}

\begin{proposition}[Localized regularity]
\label{prop:regularity}
The decomposition
$\alpha=\beta^{(0)}+\beta^{(1)}$ satisfies all conditions in
\eqref{eq:decomposition}.
\end{proposition}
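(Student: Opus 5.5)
We verify the three conditions in \eqref{eq:decomposition} separately for each component, with $K=2$, $m_0=m_1=3$, and critical shift phases $e^{-i\theta_0}=1$, $e^{-i\theta_1}=-i$. The decomposition identity $\alpha=\beta^{(0)}+\beta^{(1)}$ holds by construction \eqref{eq:alpha-decomp}. For the summability condition, $|\beta^{(j)}_n|=c(n+1)^{-3/20}$, so
\[
  \sum_n|\beta^{(j)}_n|^{8}=c^8\sum_n(n+1)^{-6/5}<\infty,
\]
since $8\cdot 3/20=6/5>1$. Hence $\beta^{(j)}\in\ell^{2m_j+2}=\ell^8$ for both $j$.

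The main work is the difference condition $(S-e^{-i\theta_j})^3\beta^{(j)}\in\ell^2$, where $(S\beta)_n=\beta_{n+1}$ is the left shift used in the paper ($S\lambda^n=\lambda\lambda^n$). The key step is a conjugation identity. For a phase $\lambda$ with $|\lambda|=1$ and any sequence $r$,
\[
  \bigl((S-\lambda)(\lambda^n r_n)\bigr)_n=\lambda^{n+1}(r_{n+1}-r_n),
\]
so by induction
\[
  \bigl((S-\lambda)^3(\lambda^n r)\bigr)_n=\lambda^{n+3}\,(\Delta^3 r)_n,
\]
with $\Delta$ the forward difference. Taking $\lambda=1$ for $\beta^{(0)}$ and $\lambda=-i$ for $\beta^{(1)}$ reduces both conditions to the single claim $\Delta^3 r\in\ell^2$.

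For that claim, write $r_n=f(n)$ with $f(x)=(x+1)^{-3/20}$. The mean value form of the third difference gives $(\Delta^3 r)_n=f'''(\xi_n)$ for some $\xi_n\in[n,n+3]$. Since $|f'''(x)|\le C(x+1)^{-3/20-3}$, we get
\[
  |(\Delta^3 r)_n|\le C'(n+1)^{-63/20},
\]
which is square-summable; even $\Delta r$ alone would suffice, as $(n+1)^{-23/20}\in\ell^2$.

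No step here is a genuine obstacle. The only point needing care is the shift convention, namely that $(S-e^{-i\theta_j})$ is matched with the phase $\lambda_j=e^{-i\theta_j}$ of $\beta^{(j)}$, giving $\lambda_0=1$ and $\lambda_1=-i$ as in Section~3.1. With the opposite convention one would instead take $\beta^{(1)}_n=c\,i^n r_n$, and the argument would be unchanged.
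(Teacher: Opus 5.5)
Your proof is correct and follows the paper's route. You get $\ell^8$ from $8\cdot\frac{3}{20}=\frac65>1$, and you reduce $(S-\lambda_j)^3\beta^{(j)}$ to $c\,\lambda_j^{n+3}(\Delta^3 r)_n=O\bigl((n+1)^{-63/20}\bigr)$, which is square summable. The conjugation identity and the mean-value bound on the third difference make explicit what the paper compresses into ``$\Delta^q r_n=O(n^{-3/20-q})$ \ldots up to unimodular constants.''
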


\begin{proof}
Because $8(3/20)=6/5>1$, both components belong to $\ell^8$. For every
fixed $q\ge0$,
\[
  \Delta^qr_n=O(n^{-3/20-q}).
\]
Consequently,
\[
  (S-1)^3\beta^{(0)}=O(n^{-63/20}),\qquad
  (S+i)^3\beta^{(1)}=O(n^{-63/20}),
\]
up to unimodular constants. Both sequences are square summable.
\end{proof}

It is also useful to record the full quadratic filter estimate
\begin{equation}
  (S-1)^3(S+i)^3\alpha\in\ell^2.
  \label{eq:quadratic-filter}
\end{equation}
Indeed, polynomials in $S$ commute and act boundedly on $\ell^2$. Therefore
\[
\begin{split}
(S-1)^3(S+i)^3\alpha
={}&(S+i)^3\bigl((S-1)^3\beta^{(0)}\bigr)\\
&+(S-1)^3\bigl((S+i)^3\beta^{(1)}\bigr)\in\ell^2.
\end{split}
\]

\section{The exact mixed sextic coefficient}

\subsection{The weight and its Fourier data}

For the chosen angles,
\begin{equation}
  H(e^{i\theta})
  =(1-\cos\theta)^3(1-\sin\theta)^3
  =\frac{i(z-1)^6(z-i)^6}{64z^6},
  \qquad z=e^{i\theta}.
  \label{eq:H-factor}
\end{equation}
Only even powers of $\cos\theta$ and $\sin\theta$ survive integration, so
\begin{equation}
  \ZH=1+\frac32+\frac32+\frac98=\frac{41}{8}.
  \label{eq:ZH-value}
\end{equation}
Equivalently, if $H(z)=\sum_{\ell=-6}^{6}h_\ell z^\ell$, then
$\ZH=h_0$. This number is the common normalization in
\eqref{eq:Yan-representative}. It turns the unnormalized value $6$ into the
actual coefficient
\[
\frac{6}{3\ZH}=\frac{16}{41}.
\]

\subsection{Removal of the divided-difference singularities}

For $k$ variables $a=(a_1,\dots,a_k)$ and $b=(b_1,\dots,b_k)$, define
\begin{equation}
  A_\ell(a)=\sum_{p=1}^{k}
  \frac{a_p^\ell}{\prod_{s\ne p}(1-a_s/a_p)},\qquad
  B_\ell(b)=\sum_{q=1}^{k}
  \frac{b_q^\ell}{\prod_{t\ne q}(b_q/b_t-1)}.
  \label{eq:AB}
\end{equation}
Let $h_m(u)$ be the complete homogeneous symmetric polynomial of degree
$m$, and let $e_k(u)=u_1\cdots u_k$. Lagrange interpolation gives, for
$\ell\ge0$,
\begin{equation}
\begin{split}
  A_\ell(a)&=h_\ell(a),\qquad B_0(b)=(-1)^{k-1},\\
  B_\ell(b)&=0\quad(1\le\ell<k),\\
  B_\ell(b)&=e_k(b)h_{\ell-k}(b)\quad(\ell\ge k).
\end{split}
\label{eq:positive-divided}
\end{equation}
For $m\ge1$, the reciprocal-variable identities are
\begin{equation}
\begin{split}
  A_{-m}(a)&=0\quad(m<k),\\
  A_{-m}(a)&=(-1)^{k-1}e_k(a^{-1})h_{m-k}(a^{-1})
  \quad(m\ge k),\\
  B_{-m}(b)&=(-1)^{k-1}h_m(b^{-1}).
\end{split}
\label{eq:negative-divided}
\end{equation}
These polynomial identities remain valid when phases coalesce and remove
every apparent $0/0$ term in \eqref{eq:Yan-representative}.

\subsection{Phase-selection evaluation}

For a pure phase $u_n=\lambda^n$, shifting by $p$ gives
\[
u_{n+p}=\lambda^pu_n.
\]
Hence a shift symbol $x^p$ is evaluated at $x=\lambda$. For a conjugated
phase, the corresponding eigenvalue is the complex conjugate of $\lambda$,
as displayed in \eqref{eq:phase-selections}. In our
decomposition the two phase bases are $\lambda_0=1$ and $\lambda_1=-i$.
Thus, for phase-selection vectors
$\varepsilon,\delta\in\{0,1\}^k$, evaluate the same shift variables that
occur in \eqref{eq:abR}--\eqref{eq:phi} by
\begin{equation}
  x_j=\lambda_{\varepsilon_j}\in\{1,-i\},\qquad
  y_j=\overline{\lambda_{\delta_j}}\in\{1,i\}.
  \label{eq:phase-selections}
\end{equation}
Let $p=|\varepsilon|$ and $q=|\delta|$ be the numbers of oscillatory
selections on the unbarred and conjugated sides. Their $n$-dependent phase is
\[
(-i)^{pn}i^{qn}=i^{(q-p)n}.
\]
We call the term \emph{resonant} when this phase is identically $1$,
equivalently when $q-p$ is divisible by four. For $k\le3$, this forces
$p=q$. The shared value $p=q$ is the \emph{common oscillatory
multiplicity}. Nonresonant terms retain a nontrivial fourth-root phase and
converge by oscillatory cancellation. At every resonant point, $R_k=1$.
Consequently, the resonant values are independent of the clearing exponent
$M_k$ chosen in \eqref{eq:Yan-representative}.

\begin{lemma}[Coefficient table]
\label{lem:coefficient-table}
The unnormalized resonant values and normalized coefficients are
\[
\begin{array}{c|c|c|c}
\text{degree}&\text{oscillatory multiplicity}&\text{raw}&\text{normalized}\\
\hline
2&0\text{ or }1&0&0\\
4&0,1,\text{ or }2&0&0\\
6&0\text{ or }3&0&0\\
6&1&6&16/41\\
6&2&6&16/41
\end{array}
\]
In particular,
\begin{equation}
  C_{2,1}=C_{1,2}=\frac{16}{41},\qquad
  C_{3,0}=C_{0,3}=0.
  \label{eq:mixed-coefficients}
\end{equation}
\end{lemma}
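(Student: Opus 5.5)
The plan is to reduce \eqref{eq:Yan-Laurent} to the divided-difference sums \eqref{eq:AB}, and then evaluate everything at the finitely many resonant phase points. Expanding $H(z)=\sum_{\ell=-6}^{6}h_\ell z^\ell$ and substituting $z=a_{k,p}b_{k,q}$ factorizes each summand:
\[
  \cL_{2k}=\sum_{\ell=-6}^{6}h_\ell\,A_\ell(a_{k,\cdot})\,B_\ell(b_{k,\cdot}).
\]
Here $A_\ell$ and $B_\ell$ are taken in the variables $a_s=a_{k,s}$ and $b_t=b_{k,t}$ of \eqref{eq:abR}. I will then replace every $A_\ell$ and $B_\ell$ by the polynomial expressions \eqref{eq:positive-divided}--\eqref{eq:negative-divided}. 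This makes $\cL_{2k}$ an explicit finite combination of complete homogeneous and elementary symmetric polynomials in the $a$'s, $b$'s and their reciprocals. It is legitimate to substitute coalescing values such as $x_j=y_j=1$, because these identities are polynomial.

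Next I evaluate at the phase selections \eqref{eq:phase-selections}. For each $k\le3$ and each $\varepsilon,\delta\in\{0,1\}^k$ with $|\varepsilon|=|\delta|$ (the resonant ones), I compute the nodes $a_{k,p}$ and $b_{k,q}$. These are fourth roots of unity: products of entries of $\{1,-i\}$ and $\{1,i\}$. I then evaluate $h_m$ and $e_k$ at these nodes and sum $h_\ell A_\ell B_\ell$. The raw entry for multiplicity $p$ is the total over all selections with $|\varepsilon|=|\delta|=p$, since by \eqref{eq:phi} expanding $\prod\alpha\bar\alpha$ with $\alpha=\beta^{(0)}+\beta^{(1)}$ produces exactly these selections.

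Before brute force, I will use a structural check on the pure selections ($p=0$, or $p=k$). Here all $a$'s and $b$'s coincide at a single critical phase ($1$, or the phase $i$ at which $z-i$ vanishes). The double divided-difference sum then collapses to a derivative of $H$ of order at most $2k-2\le4$ at that point. Since $H$ vanishes to order $6$ in $z$ at $z=1$ and $z=i$, these derivatives vanish. This explains the zeros in rows $2$, $4$ and the row ``$6$, multiplicity $0$ or $3$''.

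For degrees $2$ and $4$ with mixed selections, I expect a similar argument. There the nodes split between the two zeros, each node appearing with low confluence, so the relevant derivatives of order $<6$ still vanish. I will confirm this by direct evaluation.

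The main computation, and the step I expect to be the real obstacle, is the mixed sextic case $k=3$, $p=q\in\{1,2\}$. There the nodes split between both zeros, and the orders no longer obviously suffice to kill $H$. The sum over the $\binom31^2=9$ selections must then be shown to equal exactly $6$. I will do this by tabulating, for each selection, the multiset of nodes $a_{3,\cdot}$ and $b_{3,\cdot}$, and computing $\sum_\ell h_\ell A_\ell B_\ell$ with the explicit coefficients $h_\ell$ of $i(z-1)^6(z-i)^6/(64z^6)$. The case $p=2$ should follow from $p=1$ by the conjugation/reflection symmetry $\varepsilon\leftrightarrow\mathbf 1-\varepsilon$, which swaps the roles of $\lambda_0$ and $\lambda_1$.

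Finally, I apply the normalization \eqref{eq:Yan-representative} with $k=3$: the factor $(-1)^{4}/(3\ZH)$ and $\ZH=41/8$ turn $6$ into $16/41$. The clearing factor $R_3^{M_3}$ equals $1$ at resonant points, so it plays no role. I also check that the $-1/k$ Taylor terms of $-\log(1-|\alpha_n|^2)$ are already absorbed in Yan's representative, as stated at the end of Section~2.

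As an independent check, consistent with the remark about the GGT matrix, I will compute $\operatorname{Tr}V(U_N)$ for a finite unitary GGT matrix. I take $\alpha_n=\beta^{(0)}_n+\beta^{(1)}_n$ with constant amplitudes and read off the coefficient of the bulk term $|\beta^{(0)}|^4|\beta^{(1)}|^2$ per site. This coefficient should match $16/41$.
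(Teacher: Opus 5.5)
Your proposal follows the paper's proof: you rewrite the raw double sum as $\sum_\ell h_\ell A_\ell B_\ell$ through the polynomial divided-difference identities, and you kill the pure selections and all degree-four selections by the sixth-order zeros of $H$ at $1$ and $i$ (derivative order at most $2k-2\le 4$; at degree four every product $a_{2,p}b_{2,q}$ lies in $\{1,i\}$), evaluate the nine mixed sextic selections directly, and divide by $3Z_H=123/8$ --- the paper records the outcome you leave as a plan, namely exactly three entries equal to $2$ in each $3\times 3$ table. Your multiplicity-two shortcut is valid, because the complement map sends $a_{3,p}\mapsto i\overline{a_{3,p}}$ and $b_{3,q}\mapsto\overline{b_{3,q}}$ while $H(i/z)=H(z)$ is real on the circle, so every raw value is conjugated; however, the nonzero entries come not from insufficient vanishing order, as you suggest, but from simple nodes where a product $a_{3,p}b_{3,q}$ equals $-i$ (respectively $-1$ at multiplicity two), where $H=8$ and the denominator equals $4$.
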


\begin{proof}
Here ``raw'' means the double sum in \eqref{eq:Yan-Laurent} before
multiplication by the common prefactor $(-1)^{k+1}/(k\ZH)$;
``normalized'' means the coefficient after that multiplication. Expanding
$H(z)=\sum_\ell h_\ell z^\ell$, the raw sum becomes
\[
R_k^{M_k}\sum_{\ell=-6}^{6}h_\ell A_\ell(a)B_\ell(b),
\]
which remains valid when phases coalesce by
\eqref{eq:positive-divided}--\eqref{eq:negative-divided}.
At a resonant selection $R_k=1$, so the displayed factor is $1$ and the
following values do not depend on $M_k$.

For $k=1$, the raw expression is
\[
H(x_1y_1^2)R_1^{M_1}.
\]
The two resonant substitutions are $(x_1,y_1)=(1,1)$ and $(-i,i)$,
and both have $R_1=1$; they therefore give $H(1)=0$ and $H(i)=0$.
Hence the two degree-two entries vanish.

For $k=2$, the four arguments appearing in the numerator are especially
simple:
\[
 a_{2,1}=y_1y_2x_2,\qquad a_{2,2}=y_2,\qquad
 b_{2,1}=x_1y_1,\qquad b_{2,2}=x_1y_1x_2y_2.
\]
For example, the common-oscillatory-multiplicity-one selection
\[
 (x_1,x_2)=(1,-i),\qquad (y_1,y_2)=(1,i)
\]
gives
\[
 \bigl(a_{2,p}b_{2,q}\bigr)_{p,q=1}^{2}
 =
 \begin{pmatrix}
  1&1\\
  i&i
 \end{pmatrix}.
\]
The other three multiplicity-one selections give the same two possible
arguments, $1$ and $i$. At multiplicity zero all four arguments equal
$1$, whereas at multiplicity two all four equal $i$.

It remains to justify the derivative order needed when two variables
coalesce. For $k=2$, each denominator in the $p$-sum has only one factor,
and
\[
 1-\frac{a_2}{a_1}=\frac{a_1-a_2}{a_1}.
\]
Thus the $p$-sum is a first divided difference in $a$; independently, the
$q$-sum is a first divided difference in $b$. If
$F(a,b)=H(ab)$, the two confluent limits are therefore governed by
\[
 \partial_aF(a,b)=bH'(ab),\qquad
 \partial_b\partial_aF(a,b)=H'(ab)+abH''(ab).
\]
Consequently the removable limits use only $H,H'$, and $H''$, never a
higher derivative. The factorization \eqref{eq:H-factor} gives
\[
H(1)=H'(1)=H''(1)=0,\qquad
H(i)=H'(i)=H''(i)=0,
\]
so every degree-four resonant substitution vanishes. This is not a
cancellation between nonzero phase selections: each selection pair is
zero.

For $k=3$, a binary word records which of the three factors uses the
oscillatory phase: $0$ means the phase $1$, while $1$ means $-i$ on the
unbarred side and $i$ on the conjugated side. Thus multiplicity one has
the three words
\[
 001,\qquad 010,\qquad 100.
\]
Choosing one word independently on each side gives $3\cdot3=9$ ordered
pairs. Evaluating the raw expression above, with rows and columns in that
order, gives
\[
 \begin{pmatrix}
  0&0&2\\
  2&0&0\\
  0&2&0
 \end{pmatrix},
 \qquad
 0+0+2+2+0+0+0+2+0=6.
\]
Hence ``the nine pairs of permutations of $001$ have raw sum $6$'' means
the sum of these nine displayed entries; only three pairs contribute, and
each contributes $2$.

For multiplicity two the words, in the order $011,101,110$, give instead
\[
 \begin{pmatrix}
  0&2&0\\
  0&0&2\\
  2&0&0
 \end{pmatrix},
 \qquad
 0+2+0+0+0+2+2+0+0=6.
\]
The pure multiplicities $0$ and $3$ have all phase choices equal. Their
second divided differences on the two sides require derivatives of $H$
only through total order four; these vanish because
\eqref{eq:H-factor} has a sixth-order zero at $1$ and at $i$. Thus the
pure raw sums are zero, while the two mixed raw sums are $6$. Finally,
$(-1)^{k+1}=1$ and $\ZH=41/8$, so each mixed normalized coefficient is
$6/[3(41/8)]=16/41$.
\end{proof}

\section{Passage to the slowly varying sequence}

\subsection{A frozen-amplitude expansion}

For each fixed integer $s$,
\begin{equation}
  r_{n+s}=r_n\left(1+O\left(\frac1n\right)\right).
  \label{eq:slow-variation}
\end{equation}

\begin{lemma}[Phase-selection expansion]
\label{lem:phase-selection-expansion}
Let $F$ be any finite-range homogeneous polynomial of degree $2k$ in
shifted copies of $\alpha$ and its conjugate. For the sequence
\eqref{eq:alpha-decomp}, $F_n$ is a finite sum of terms
\begin{equation}
  c^{2k}\omega^nr_n^{2k}
  +O\left(\frac{r_n^{2k}}{n}\right),
  \qquad \omega\in\{1,i,-1,-i\}.
  \label{eq:phase-selection-asymptotic}
\end{equation}
\end{lemma}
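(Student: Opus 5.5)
The plan is to expand $F$ monomial by monomial and freeze the amplitude at index $n$. Because $F$ has finite range, it is a finite linear combination of monomials
\[
\prod_{j=1}^{k}\alpha_{n+p_j}\overline{\alpha_{n+q_j}},\qquad 0\le p_j,q_j\le D,
\]
where $D$ is the range of $F$. Linearity then reduces the statement to a single monomial.

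First, substitute $\alpha_{n+s}=c\,r_{n+s}\bigl(1+(-i)^{n+s}\bigr)$ from \eqref{eq:alpha-decomp}. Multiplying out the $2k$ binomial factors gives a sum over phase-selection vectors $\varepsilon,\delta\in\{0,1\}^k$, exactly as in \eqref{eq:phase-selections}. Each term has the form
\[
c^{2k}\Bigl(\prod_{j}\lambda_{\varepsilon_j}^{\,p_j}\,\overline{\lambda_{\delta_j}}^{\,q_j}\Bigr)\, i^{(|\delta|-|\varepsilon|)n}\,\prod_{j} r_{n+p_j}r_{n+q_j}.
\]
The $r$'s are real, so conjugation acts only on the phases. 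The constant unimodular prefactor is absorbed into the coefficient of the term. The $n$-dependent phase is $\omega^n$ with $\omega=i^{|\delta|-|\varepsilon|}\in\{1,i,-1,-i\}$.

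Second, replace the shifted amplitudes by $r_n$. By \eqref{eq:slow-variation}, applied with $0\le s\le D$, each factor satisfies $r_{n+s}=r_n(1+O(1/n))$. A product of $2k$ such factors is therefore
\[
r_n^{2k}\bigl(1+O(1/n)\bigr)^{2k}=r_n^{2k}+O(r_n^{2k}/n).
\]
The implied constant depends only on $D$ and $k$, and the phase factors have modulus one. Each term is therefore $c^{2k}\cdot(\text{unimodular const})\cdot\omega^n r_n^{2k}+O(r_n^{2k}/n)$, which is the form \eqref{eq:phase-selection-asymptotic} up to a constant coefficient. Summing over the finitely many monomials and selections finishes the argument.

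The only step needing real care is \eqref{eq:slow-variation} itself. For that I would use the mean value theorem: $(n+1+s)^{-3/20}/(n+1)^{-3/20}=(1+s/(n+1))^{-3/20}=1+O(s/n)$, uniformly for $0\le s\le D$. The lemma is stated for $n\ge1$, so there is no issue at $n=0$. Otherwise the proof is bookkeeping. The one subtlety worth stating explicitly is that the error term carries the full factor $r_n^{2k}$, not merely $O(1/n)$, because every monomial has exact degree $2k$. The later resonant/nonresonant analysis depends on exactly this.
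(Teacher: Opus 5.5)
Your proposal is correct and follows the paper's proof, which likewise expands each $\alpha$ factor into its constant-phase and oscillatory-phase components and applies \eqref{eq:slow-variation} to each shifted amplitude, with uniformity coming from the finitely many monomials and phase selections. You simply supply the bookkeeping the paper leaves implicit, namely the explicit phase $\omega=i^{|\delta|-|\varepsilon|}$, the absorbed unimodular constants and coefficients of $F$, and the proof of the slow-variation estimate.
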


\begin{proof}
Expand every $\alpha$ factor into its constant-phase and oscillatory-phase
components and use
\eqref{eq:slow-variation} for each shifted amplitude. There are only
finitely many monomials and phase selections, so the errors remain
uniform.
\end{proof}

\subsection{Convergence part}

At degree two, the local polynomial is a positive multiple of the
quadratic filtered expression, hence its partial sums are bounded by
\eqref{eq:quadratic-filter}. At degree four, Lemma
\ref{lem:coefficient-table} shows that every resonant coefficient vanishes.
The remaining fourth-degree terms oscillate and converge by Dirichlet's
test. At degree six,
\begin{equation}
  [\phi_6(\cF_6^{(M_3)})]_n
  =\frac{32c^6}{41}(n+1)^{-9/10}
  +\text{oscillatory terms}
  +O(n^{-19/10}).
  \label{eq:sextic-local}
\end{equation}
For every $k\ge4$, the absolute value of the degree-$2k$ local term is
$O(r_n^{2k})$, and
\[
  2k(3/20)\ge6/5>1.
\]
Thus all higher-degree terms are absolutely summable. The logarithmic
remainder
\begin{equation}
  -\log(1-|\alpha_n|^2)
  -\sum_{k=1}^{6}\frac{|\alpha_n|^{2k}}{k}
  \label{eq:log-remainder}
\end{equation}
is nonnegative and $O(|\alpha_n|^{14})$, hence summable.

\subsection{Divergence and proof of the main theorem}

Define the Yan-representative partial sum by
\begin{equation}
\begin{split}
  S_N
  :=\sum_{n=0}^{N}\Bigg[
    &\sum_{k=1}^{6}[\phi_{2k}(\cF_{2k}^{(M_k)})]_n
    -\log(1-|\alpha_n|^2)\\
    &-\sum_{k=1}^{6}\frac{|\alpha_n|^{2k}}{k}
  \Bigg].
\end{split}
\label{eq:SN-definition}
\end{equation}
Replacing the shifted version of the term $-1/k$ in Yan's representative
by the unshifted last sum in \eqref{eq:SN-definition} changes partial sums
only by endpoint terms, hence by $O(1)$. Combining the preceding estimates
yields
\begin{equation}
  S_N=\frac{32c^6}{41}\sum_{n<N}(n+1)^{-9/10}+O(1)
  =\frac{320c^6}{41}N^{1/10}+O(1).
  \label{eq:divergence}
\end{equation}
Therefore $S_N\to+\infty$. Proposition~\ref{prop:comparison} below gives
\[
  \mathcal B_N(\alpha)=S_N+O(1),
\]
uniformly in $N$. Hence the BSZ limit is $+\infty$.
Theorem~\ref{thm:BSZ} and \eqref{eq:relative-entropy} then imply
\eqref{eq:main-entropy}, proving Theorem~\ref{thm:main}.

\section{Independent finite-unitary verification}

Sections 2--5 already prove the theorem, assuming Yan's published
homogeneous representative \eqref{eq:Yan-representative}. This section is
an independent consistency check, not an additional hypothesis. It
recomputes the decisive combined coefficient $32/41$ directly from the
finite-unitary BSZ functional, thereby checking the normalization,
conjugations, removable divided differences, and boundary convention.

\subsection{The genuine BSZ boundary convention}

Freeze the amplitude and set
\[
  \alpha_n^{(t)}=t\bigl(1+(-i)^n\bigr),\qquad 0\le n\le N-2,
  \qquad \alpha_{N-1}=1.
\]
Define
\[
  \rho_j=\bigl(1-|\alpha_j|^2\bigr)^{1/2}.
\]
The finite unitary GGT matrix has entries
\begin{equation}
  (U_N)_{k\ell}=
  -\alpha_{k-1}\overline{\alpha_\ell}
  \prod_{j=k}^{\ell-1}\rho_j\quad(k\le\ell),\qquad
  (U_N)_{\ell+1,\ell}=\rho_\ell,
  \label{eq:GGT}
\end{equation}
with $(U_N)_{k\ell}=0$ for $k\ge\ell+2$ and $\alpha_{-1}=-1$.
Define
\begin{equation}
  \cE_N(t)=\operatorname{Tr}V(U_N(t))
  -\sum_{n=0}^{N-2}\log(1-|\alpha_n^{(t)}|^2).
  \label{eq:finite-energy}
\end{equation}

\subsection{Exact Taylor coefficients}

The four periodic squared amplitudes are $4,2,0,2$ times $t^2$. For
$q\in\{0,2,4\}$,
\begin{equation}
  \sqrt{1-qt^2}
  =1-\frac q2t^2-\frac{q^2}{8}t^4-\frac{q^3}{16}t^6+O(t^8).
  \label{eq:rho-series}
\end{equation}
Substitution into \eqref{eq:GGT}, exact multiplication through degree six,
and the Fourier coefficients of Appendix~\ref{app:fourier} give
\begin{proposition}[Exact finite-unitary block check]
\label{prop:GGT-check}
With $[t^m]$ denoting Taylor coefficient extraction,
\begin{equation}
  \frac14[t^2](\cE_{12}-\cE_8)=0,\qquad
  \frac14[t^4](\cE_{12}-\cE_8)=0,\qquad
  \frac14[t^6](\cE_{12}-\cE_8)=\frac{32}{41}.
  \label{eq:GGT-check}
\end{equation}
\end{proposition}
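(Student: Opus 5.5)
The plan is to compute $\cE_{12}-\cE_8$ as an exact polynomial identity through order $t^6$, using the local trace formula \eqref{eq:local-trace}. Since both $N=12$ and $N=8$ are multiples of the period $4$ of the phase pattern $1+(-i)^n$, the boundary polynomials $F_\pm$ should coincide for the two sizes. At the left end the data $\alpha_{-1}=-1,\alpha_0,\dots$ are identical. At the right end the last few coefficients, including $\alpha_{N-1}=1$, sit at the same residues modulo $4$. The difference $\cE_{12}-\cE_8$ is therefore exactly one extra period's worth of bulk terms, namely $\sum_{n}[G(\alpha_n,\dots,\alpha_{n+d})-\log(1-|\alpha_n|^2)]$ over four consecutive indices. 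Dividing by $4$ gives the per-site average.

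The Taylor coefficients of this average should match the resonant (phase-averaged) values from Lemma~\ref{lem:coefficient-table}. With amplitude $t$ in place of $c r_n$, the sextic average should equal twice $16/41$ (from $C_{2,1}+C_{1,2}$), i.e. $32/41$. The quadratic and quartic averages should vanish, since all their resonant coefficients are zero. I will check this consistency explicitly, but I will not use it as the proof.

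The actual verification is a direct computation. I will first expand $V$ from \eqref{eq:potential} using the Fourier data of $H$ in \eqref{eq:H-factor} (Appendix~\ref{app:fourier}), with $\ZH=41/8$. Then I will build the $N\times N$ GGT matrix \eqref{eq:GGT} for $N=8,12$, with entries polynomial in $t$ through the series \eqref{eq:rho-series} for the three distinct values $\rho\in\{\sqrt{1-4t^2},\sqrt{1-2t^2},1\}$. Note that $\rho_{N-1}=0$ because $|\alpha_{N-1}|=1$.

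Next I compute $\operatorname{Tr}U^\ell$ for $1\le\ell\le6$, along with the adjoint powers (using $U^{-1}=U^*$), truncated at $t^6$. These traces are exact sums over closed paths of length $\ell$ in a Hessenberg matrix, so only entries within distance $6$ of the diagonal matter, and the computation is finite. For the log term I subtract $\sum_n\sum_{k\le3}|\alpha_n|^{2k}/k$. The periodic squared amplitudes $4t^2,2t^2,0,2t^2$ give per period $8t^2$, then $(16+4+4)t^4/2=12t^4$, then $(64+8+8)t^6/3=80t^6/3$.

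The main obstacle is confirming that the boundary contributions really cancel between $N=12$ and $N=8$. The right-end truncation produces the factor $\rho_{N-1}=0$ and the entries $\overline{\alpha_{N-1}}=1$, so paths that touch the last $d=6$ rows are altered. I would argue that the set of such paths, together with their weights, is translation-equivalent for the two sizes. Since $N\equiv0\pmod 4$, a shift by $4$ maps the last-window data of $U_8$ onto that of $U_{12}$ exactly. Likewise the first-window data agree by identity. The leftover middle block then contributes exactly one period of interior paths.

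With that established, the $t^2$ and $t^4$ coefficients vanish by cancellation against the log terms, and the $t^6$ coefficient equals $4\cdot 32/41$. This last step is bookkeeping best done by exact rational arithmetic, which I would carry out symbolically and report.
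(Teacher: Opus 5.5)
Your plan is the paper's own argument: expand the $8\times 8$ and $12\times 12$ GGT matrices exactly through $t^6$, insert the Fourier data of $H$, and read off the coefficients. Your closed-path justification that $\cE_{12}-\cE_8$ is exactly one period of bulk terms is sound, since a closed path of length $\le 6$ in a Hessenberg matrix spans at most six consecutive indices, so even for $N=8$ no path sees both ends. One slip needs fixing: $-\log(1-|\alpha_n|^2)$ enters $\cE_N$ with a plus sign, so $\sum_k |\alpha_n|^{2k}/k$ (per period $8t^2+12t^4+\tfrac{80}{3}t^6$) must be added, not subtracted, and only then does it cancel the quadratic contribution $-8t^2$ per period coming from $\operatorname{Tr}V(U_N)$.
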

Thus the genuine BSZ bulk polynomial has the same vanishing quadratic and
quartic frozen coefficients and the same positive sextic coefficient as
Yan's representative.

\section{Boundary, sign, and convergence check}

The local calculation in Lemma~\ref{lem:coefficient-table} identifies a
positive term, but a complete proof must also show that every omitted
contribution is bounded and cannot cancel its $N^{1/10}$ growth. This
section supplies those estimates. Its conclusions are used in the
divergence argument; they are separated here to keep the main coefficient
calculation readable.

\subsection{Finite-volume boundaries}

For comparison with the independent check in Section~6, define the
genuine finite-unitary functional
\begin{equation}
  \mathcal K_N(\alpha)
  =\operatorname{Tr}V(U_N)
  -\sum_{n=0}^{N-2}\log(1-|\alpha_n|^2),
  \label{eq:finite-unitary-functional}
\end{equation}
where $\alpha_{N-1}\in\partial\mathbb D$ is the terminal coefficient.
The BSZ boundary terms $F_-$ and $F_+$ are fixed polynomials in finitely
many coefficients. Since $|\alpha_n|\le1$, they are uniformly bounded.
For the slowly varying sequence, the right boundary has a finite limit
after fixing the terminal unit-circle parameter. It cannot cancel the
positive $N^{1/10}$ term in \eqref{eq:divergence}.
The difference between the full logarithmic sum in
\eqref{eq:finite-unitary-functional} and the logarithms paired with the
bulk range in
\eqref{eq:local-trace} contains only the fixed number $d-1$ of terms
with indices $N-d,\dots,N-2$.  Since $\alpha_n\to0$, this difference
tends to zero. Consequently,
\[
  \mathcal K_N(\alpha)
  =\mathcal B_{N-1-d}(\alpha)+O(1).
\]
This finite-volume comparison is not being substituted for the published
BSZ abstract gem; it is a consequence of the trace decomposition
\eqref{eq:local-trace}.

\subsection{Quotient representatives}

Let $R=\prod x_jy_j$. If two ordinary polynomial representatives differ
by $(R-1)Q$, and $q_n=[\phi(Q)]_n$, then
\begin{equation}
  \sum_{n=0}^{N}[\phi((R-1)Q)]_n
  =\sum_{n=0}^{N}(q_{n+1}-q_n)=q_{N+1}-q_0.
  \label{eq:telescoping}
\end{equation}
Every monomial in $q_n$ is bounded by one, so all quotient changes
contribute $O(1)$ uniformly in $N$.

In particular, let $M_k'>M_k$ be two exponents that both clear the
negative powers of $\cL_{2k}$.  (The case $M_k'=M_k$ is trivial.) Writing
$P=R_k^{M_k}\cL_{2k}$, which is an ordinary polynomial, gives
\[
 R_k^{M_k'}\cL_{2k}-R_k^{M_k}\cL_{2k}
 =(R_k-1)
 \bigl(1+R_k+\cdots+R_k^{M_k'-M_k-1}\bigr)P.
\]
Thus different admissible clearing exponents change the coefficient-side
partial sums by only $O(1)$.  This also proves the assertion following
\eqref{eq:Yan-representative}.

\begin{proposition}[Uniform comparison of coefficient sides]
\label{prop:comparison}
For the sequence \eqref{eq:main-alpha}, the BSZ bulk partial sums, Yan
partial sums, and genuine finite-unitary functionals satisfy
\begin{equation}
  \mathcal B_N(\alpha)=S_N+O(1),\qquad
  \mathcal K_N(\alpha)
  =\mathcal B_{N-1-d}(\alpha)+O(1),
  \label{eq:three-way-comparison}
\end{equation}
where both error bounds are uniform in $N$.
\end{proposition}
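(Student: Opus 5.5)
The proof splits into the two identities in \eqref{eq:three-way-comparison}, handled separately.

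\emph{Second identity.} This is essentially a bookkeeping consequence of the trace decomposition \eqref{eq:local-trace}. Substituting \eqref{eq:local-trace} into \eqref{eq:finite-unitary-functional} gives
\[
\mathcal K_N(\alpha)=F_-+F_++\sum_{n=0}^{N-1-d}\bigl[G(\alpha_n,\dots,\alpha_{n+d})-\log(1-|\alpha_n|^2)\bigr]-\sum_{n=N-d}^{N-2}\log(1-|\alpha_n|^2).
\]
The bracketed sum is exactly $\mathcal B_{N-1-d}(\alpha)$. The boundary polynomials $F_\pm$ involve boundedly many coefficients, each of modulus at most $1$ (including the terminal unimodular $\alpha_{N-1}$), so they are $O(1)$ uniformly in $N$. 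The last sum has $d-1=5$ terms with $|\alpha_n|\le2c<1$, hence it is bounded by $5|\log(1-4c^2)|$. No cancellation or sign issue arises here.

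\emph{First identity.} This requires identifying the BSZ local polynomial $G$ with Yan's representative up to telescoping. I would argue in three steps.

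\begin{enumerate}[label=(\roman*)]
\item Split $G=\sum_{k=1}^{d}G_{2k}$ into homogeneous parts and Taylor expand $-\log(1-|\alpha_n|^2)=\sum_k|\alpha_n|^{2k}/k$. The tail beyond $k=6$ is nonnegative and $O(|\alpha_n|^{14})$, so it is summable by \eqref{eq:log-remainder}.
\item Invoke Yan's theorem \cite{Yan}: for each $k$, the sequence polynomial $G_{2k}$ and $\phi_{2k}(\cF_{2k}^{(M_k)})$ (with the $-1/k$ term combined as described after \eqref{eq:phi}) define the same class in Yan's quotient ring. Their difference is therefore $\phi((R-1)Q)$ for some ordinary polynomial $Q$, and possibly also differences of shifted monomials, i.e.\ translation-equivalence. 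By \eqref{eq:telescoping}, each such difference contributes $q_{N+1}-q_0$, bounded by a constant times the number of monomials in $Q$, since every $|\alpha_n|\le1$.
\item The replacement of the shifted $|\alpha_{n+j}|^{2k}$ terms by the unshifted ones in \eqref{eq:SN-definition} is also a telescoping endpoint correction. Summing over the finitely many $k\le d=6$ keeps all constants uniform.
\end{enumerate}

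\emph{Main obstacle.} The hardest step is (ii): making precise that ``equality in the quotient ring'' translates into equality of partial sums up to boundary terms. This requires checking that the ideal Yan quotients by is generated exactly by $R-1$ (simultaneous shift invariance) together with symmetrization relations among the $x_j,y_j$. The symmetrization relations act by permuting factors and so leave the sequence polynomial unchanged. Once that is granted, every relation is either exact or telescoping, and the bound follows.

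As a consistency check, I would compare the frozen-amplitude coefficients: by Proposition~\ref{prop:GGT-check} both sides have quadratic and quartic coefficients $0$ and sextic coefficient $32/41$. This is consistent with, though not a substitute for, the telescoping identification.
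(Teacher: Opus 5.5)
Your proposal is correct and follows essentially the same route as the paper. For the second identity you substitute the trace decomposition and bound $F_\pm$ and the $d-1$ unmatched right-edge logarithms. For the first you use Yan's degree-by-degree class identification, the telescoping identity \eqref{eq:telescoping}, and the endpoint shift of the $-1/k$ term. Two small points: your step (i) is unnecessary, since $-\log(1-|\alpha_n|^2)$ appears identically in $\mathcal B_N$ and $S_N$ and cancels exactly. Your mention of symmetrization relations slightly refines the paper's bare assertion that two representatives differ by $(R_k-1)Q_k$.
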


\begin{proof}
Yan's theorem identifies, degree by degree, the quotient class of the
homogeneous part $G_{2k}$ with the class represented by
\eqref{eq:Yan-representative}, including the term $-1/k$.  The difference
of two ordinary representatives of the same quotient class is
$(R_k-1)Q_k$.  Summing its image telescopes by
\eqref{eq:telescoping}, and the endpoint values are uniformly bounded
because $|\alpha_n|\le1$.  Moving the common shift in the representative
of $-1/k$ back to $|\alpha_n|^{2k}/k$ is another finite endpoint
telescoping. Summing over the finitely many degrees $1\le k\le d=6$
proves the first identity in \eqref{eq:three-way-comparison}.

For the second identity, insert the BSZ trace decomposition
\eqref{eq:local-trace} into \eqref{eq:finite-unitary-functional}.
The two boundary polynomials are uniformly bounded, and only $d-1$
right-edge logarithms remain unmatched.  Those logarithms tend to zero
because $\alpha_n\to0$. This proves the second identity.
\end{proof}

\section{Conclusion}

The general Lukic conjecture fails because localized components can
interact through mixed resonances that are invisible to the separate
$\ell^{2m_j+2}$ conditions. For two third-order critical points, all frozen
resonances through degree four vanish, but the mixed sextic coefficient is
positive. The interval
\[
  \frac18<\frac3{20}<\frac16
\]
turns that algebraic obstruction into a divergent sum while preserving
every proposed decomposition condition.

A corrected theorem will need additional mixed-frequency conditions. The
exact coefficient table suggests that these conditions should be organized
by resonant oscillatory multiplicities of the homogeneous sum-rule polynomials,
rather than solely by the individual critical orders.

\appendix

\section{Exact Fourier and resonance data}
\label{app:fourier}

The nonzero Laurent coefficients of \eqref{eq:H-factor} for nonnegative
indices are
\[
\begin{array}{c|c}
\ell&h_\ell\\
\hline
0&41/8\\
1&-51/16+(51/16)i\\
2&-(195/64)i\\
3&35/32+(35/32)i\\
4&-9/16\\
5&3/32-(3/32)i\\
6&(1/64)i
\end{array}
\]
and $h_{-\ell}=\overline{h_\ell}$. The complete resonant raw sums obtained
from \eqref{eq:AB}--\eqref{eq:negative-divided} are
\begin{equation}
  k=1:(0,0),\qquad
  k=2:(0,0,0),\qquad
  k=3:(0,6,6,0).
  \label{eq:raw-sums}
\end{equation}
The entries are ordered by the common oscillatory multiplicity. For
$k=3$, multiplication by $1/(3\ZH)$ converts the two middle entries to
$16/41$.


\begin{thebibliography}{99}
\footnotesize

\bibitem{BSZ}
J. Breuer, B. Simon, and O. Zeitouni,
\emph{Large deviations and the Lukic conjecture},
Duke Math. J. \textbf{167} (2018), no.~15, 2857--2902;
\href{https://arxiv.org/abs/1703.00653}{arXiv:1703.00653}.

\bibitem{BSZPedagogical}
J. Breuer, B. Simon, and O. Zeitouni,
\emph{Large deviations and sum rules for spectral theory---a pedagogical
approach},
J. Spectr. Theory, to appear (as cited in \cite{Yan}).

\bibitem{DenisovKupin}
S. Denisov and S. Kupin,
\emph{Asymptotics of the orthogonal polynomials for the Szeg\H{o} class
with a polynomial weight},
J. Approx. Theory (2006), 8--28.

\bibitem{Gagliardo}
E. Gagliardo,
\emph{Propriet\`a di alcune classi di funzioni in pi\`u variabili},
Ricerche Mat. (1958), 102--137.

\bibitem{GNRmatrix}
F. Gamboa, J. Nagel, and A. Rouault,
\emph{Sum rules and large deviations for spectral matrix measures},
preprint (as cited in \cite{Yan}).

\bibitem{GNRcircle}
F. Gamboa, J. Nagel, and A. Rouault,
\emph{Sum rules and large deviations for spectral measures on the unit
circle},
preprint (as cited in \cite{Yan}).

\bibitem{GNR}
F. Gamboa, J. Nagel, and A. Rouault,
\emph{Sum rules via large deviations},
J. Funct. Anal. \textbf{270} (2016), 509--559.

\bibitem{GZ}
L. Golinskii and A. Zlato\v{s},
\emph{Coefficients of orthogonal polynomials on the unit circle and
higher-order Szeg\H{o} theorems},
Constr. Approx. \textbf{26} (2007), 361--382.

\bibitem{LukicConjecture}
M. Lukic,
\emph{On a conjecture for higher-order Szeg\H{o} theorems},
Constr. Approx. \textbf{38} (2013), 161--169;
\href{https://arxiv.org/abs/1210.6953}{arXiv:1210.6953}.

\bibitem{LukicSingle}
M. Lukic,
\emph{On higher-order Szeg\H{o} theorems with a single critical point of
arbitrary order},
Constr. Approx., to appear (as cited in \cite{Yan}).

\bibitem{Macdonald}
I. G. Macdonald,
\emph{Symmetric Functions and Hall Polynomials},
2nd ed., Clarendon Press, 1998.

\bibitem{Nirenberg}
L. Nirenberg,
\emph{On elliptic partial differential equations},
Ann. Scuola Norm. Sup. Pisa (1959), 115--162.

\bibitem{SimonOPUC}
B. Simon,
\emph{Orthogonal Polynomials on the Unit Circle, Part 1: Classical Theory},
AMS Colloquium Publications 54.1, 2005.

\bibitem{SimonGems}
B. Simon,
\emph{Szeg\H{o}'s Theorem and Its Descendants: Spectral Theory for
$L^2$ Perturbations of Orthogonal Polynomials},
Princeton University Press, Princeton, NJ, 2011.

\bibitem{SimonAnalysis}
B. Simon,
\emph{A Comprehensive Course in Analysis, Part 3: Harmonic Analysis},
American Mathematical Society, Providence, RI, 2015.

\bibitem{SimonZlatos}
B. Simon and A. Zlato\v{s},
\emph{Higher-order Szeg\H{o} theorems with two singular points},
J. Approx. Theory \textbf{134} (2005), no.~1, 114--129;
\href{https://arxiv.org/abs/math-ph/0409065}{arXiv:math-ph/0409065}.

\bibitem{Szego}
G. Szeg\H{o},
\emph{Orthogonal Polynomials},
American Mathematical Society, 1939.

\bibitem{Taylor}
M. Taylor,
\emph{Partial Differential Equations III: Nonlinear Equations},
2nd ed., Springer, New York, 2011.

\bibitem{Yan}
J. Yan,
\emph{An algebra model for the higher-order sum rules},
Constr. Approx. \textbf{48} (2018), 453--471;
\href{https://arxiv.org/abs/1706.07925}{arXiv:1706.07925}.

\end{thebibliography}
\end{document}